\newtheorem{theorem}{Theorem}
\theoremstyle{plain}
\newtheorem{acknowledgement}{Acknowledgement}
\newtheorem{example}{Example}
\newtheorem{lemma}{Lemma}
\begin{document}
\title[Inverse Sturm-Liouville problem ]{Inverse nodal problem for Dirac
operator with integral type nonlocal boundary conditions}
\author{A. Sinan Ozkan}
\curraddr{Department of Mathematics, Faculty of Science, Sivas Cumhuriyet
University 58140 Sivas, TURKEY}
\email{sozkan@cumhuriyet.edu.tr}
\author{\.{I}brahim Adalar}
\curraddr{Zara Veysel Dursun Colleges of Applied Sciences, Sivas Cumhuriyet
University Zara/Sivas, TURKEY}
\email{iadalar@cumhuriyet.edu.tr}
\subjclass[2010]{ 34A55, 34L05, 34K29, 34K10}
\keywords{Dirac operator, Nonlocal boundary condition, Inverse nodal
problem. }

\begin{abstract}
In this paper, Dirac operator with some integral type nonlocal boundary
conditions is studied. We show that the coefficients of the problem can be
uniquely determined by a dense set of nodal points. Moreover, we give an
algorithm for the reconstruction of some coefficients of the operator.
\end{abstract}

\thanks{The author thanks to the reviewers for constructive comments and
recommendations which help to improve the readability and quality of the
paper.}
\maketitle

\section{\textbf{Introduction}}

The inverse nodal problem, posed and solved firstly by McLaughlin for a
Sturm-Liouville operator \cite{mc1}. McLaughlin showed that the potential of
a Sturm-Liouville problem can be determined by a given dense subset of nodal
points. Later on, Hald and McLaughlin give some numerical schemes for the
reconstruction of the potential \cite{H}. X.F. Yang gave an algorithm for
the solution of the inverse nodal Sturm-Liouville problem \cite{yang}. His
method is the first of the algorithms still in use today. Inverse nodal
problems for various Sturm-Liouville operators have been studied in\ the
papers (\cite{yangx}-\cite{guo2} ).

Nonlocal boundary conditions appear when we cannot measure data directly at
the boundary. This kind conditions arise in various some applied problems of
biology, biotechnology, physics and etc. Two types of nonlocal boundary
conditions come to the fore. One class of them is called integral type
conditions, and the other is the Bitsadze-Samarskii-type conditions.

Some inverse problems for a class of Sturm-Liouville operators with nonlocal
boundary conditions are investigated in \cite{niz,niz2}. In particular,
inverse nodal problems for this-type operators with different nonlocal
integral boundary conditions are studied in (\cite{yang6}-\cite{yan6}).

The inverse nodal problems for Dirac operators with local and separated
boundary conditions are studied in (\cite{jde}-\cite{yang10}). In their
works, it is shown that the zeros of the first components of the
eigenfunctions determines the coefficients of operator. In \cite{yangyurko},
nonlocal conditions together with the Dirac system are considered. They give
some uniqueness theorems according to the classical spectral data. As far as
we know, inverse nodal problem for the Dirac system with nonlocal boundary
conditions has not been considered before.

In the present paper, we consider Dirac operator under some integral type
nonlocal boundary conditions and obtain the uniqueness of coefficients of
the problem according to a set of nodal points. Moreover, we give an
algorithm for the reconstruction of these coefficients.

We consider the boundary value problem $L$ generated by the system of Dirac
differential equations system:%
\begin{equation}
\ell \left[ Y(x)\right] :=BY^{\prime }(x)+\Omega (x)Y(x)=\lambda Y(x),\text{
\ }x\in (0,\pi ),  \label{1}
\end{equation}%
subject to the boundary conditions

\begin{eqnarray}
U(y) &:&=y_{1}(0)\sin \alpha +y_{2}(0)\cos \alpha -I_{f}(Y)=0\bigskip
\label{2} \\
V(y) &:&=y_{1}(\pi )\sin \beta +y_{2}(\pi )\cos \beta -I_{g}(Y)=0\bigskip
\label{3}
\end{eqnarray}%
where $B=\left( 
\begin{array}{cc}
0 & 1 \\ 
-1 & 0%
\end{array}%
\right) ,$ $\Omega (x)=\left( 
\begin{array}{cc}
V(x)+m & 0 \\ 
0 & V(x)-m%
\end{array}%
\right) ,$ $Y(x)=\left( 
\begin{array}{c}
y_{1}(x) \\ 
y_{2}(x)%
\end{array}%
\right) \bigskip $, $I_{f}(Y)=\int_{0}^{\pi }\left(
f_{1}(x)y_{1}(x)+f_{2}(x)y_{2}(x)\right) dx,$ $I_{g}(Y)=\int_{0}^{\pi
}\left( g_{1}(x)y_{1}(x)+g_{2}(x)y_{2}(x)\right) dx$,$\bigskip $ $\alpha $, $%
\beta $ and $m$ are real constants and $\lambda $ is the spectral parameter.
We assume $\Omega (x)$, $f_{i}(x)$ and $g_{i}(x)$ are real-valued functions
in the class of $W_{2}^{1}(0,\pi )$ for $i=1,2$ and put $F_{0}:=f_{1}\left(
0\right) \sin \alpha +f_{2}(0)\cos \alpha ,$ $F_{\pi }:=f_{1}\left( \pi
\right) \sin \beta +f_{2}(\pi )\cos \beta ,$ $G_{0}:=g_{1}\left( 0\right)
\sin \alpha +g_{2}(0)\cos \alpha $ and $G_{\pi }:=g_{1}\left( \pi \right)
\sin \beta +g_{2}(\pi )\cos \beta $.

\section{\textbf{Spectral properties of the problem}}

Let $S(x,\lambda )=\left( 
\begin{array}{c}
S_{1}(x,\lambda ) \\ 
S_{2}(x,\lambda )%
\end{array}%
\right) $ and $C(x,\lambda )=\left( 
\begin{array}{c}
C_{1}(x,\lambda ) \\ 
C_{2}(x,\lambda )%
\end{array}%
\right) $ be the solutions of (\ref{1}) under the initial conditions 
\begin{eqnarray*}
S_{1}(0,\lambda ) &=&0\text{, }S_{2}(0,\lambda )=-1\medskip \\
C_{1}(0,\lambda ) &=&1\text{, }C_{2}(0,\lambda )=0\medskip
\end{eqnarray*}%
respectively. $C(x,\lambda )$ and $S(x,\lambda )$ are entire functions of $%
\lambda $ for each fixed $x$ and the following asymptotic relations hold for 
$\left\vert \lambda \right\vert \rightarrow \infty $ (see \cite{jde}).

\begin{equation}
C_{1}(x,\lambda )=\cos (\lambda x-\omega (x))+\dfrac{m^{2}x}{2\lambda }\sin
(\lambda x-\omega (x))+o\left( \dfrac{e^{\left\vert \tau \right\vert x}}{%
\lambda }\right) \bigskip ,\medskip  \label{4}
\end{equation}%
\begin{equation}
C_{2}(x,\lambda )=\sin (\lambda x-\omega (x))-\dfrac{m}{\lambda }\sin
(\lambda x-\omega (x))-\dfrac{m^{2}x}{2\lambda }\cos (\lambda x-\omega
(x))+o\left( \dfrac{e^{\left\vert \tau \right\vert x}}{\lambda }\right) ,
\label{5}
\end{equation}%
\begin{equation}
S_{1}(x,\lambda )=\sin (\lambda x-\omega (x))+\dfrac{m}{\lambda }\sin
(\lambda x-\omega (x))-\dfrac{m^{2}x}{2\lambda }\cos (\lambda x-\omega
(x))+o\left( \dfrac{e^{\left\vert \tau \right\vert x}}{\lambda }\right)
,\medskip  \label{6}
\end{equation}%
\begin{equation}
S_{2}(x,\lambda )=-\cos (\lambda x-\omega (x))-\dfrac{m^{2}x}{2\lambda }\sin
(\lambda x-\omega (x))+o\left( \dfrac{e^{\left\vert \tau \right\vert x}}{%
\lambda }\right) ,  \label{7}
\end{equation}%
where $\omega (x)=\int_{0}^{x}V(t)dt$ and $\tau =$Im$\lambda .$

The characteristic function of problem (\ref{1})-(\ref{3}) is%
\begin{equation}
\Delta (\lambda )=\det \left( 
\begin{array}{cc}
U(C) & U(S)\medskip \\ 
V(C) & V(S)\medskip%
\end{array}%
\right)  \label{8}
\end{equation}%
and the zeros of $\Delta (\lambda )$ coincide with the eigenvalues of the
problem (\ref{1})-(\ref{3}). Clearly, $\Delta (\lambda )$ is entire function
and so the problem has a discrete spectrum. \ 

Using the asymptotic formulas (\ref{4})-(\ref{7}) in (\ref{8}), one can
easily obtain

\begin{eqnarray*}
\Delta (\lambda ) &=&\sin (\lambda \pi -\omega (\pi )+\beta -\alpha )-\dfrac{%
m^{2}\pi }{2\lambda }\cos (\lambda \pi -\omega (\pi )+\beta -\alpha )\bigskip
\\
&&-\dfrac{1}{\lambda }\left( f_{1}\left( \pi \right) \sin \beta +f_{2}\left(
\pi \right) \cos \beta -f_{2}\left( 0\right) \cos (\lambda \pi -\omega (\pi
)+\beta )\right) \bigskip \\
&&-\dfrac{1}{\lambda }\left( g_{1}\left( \pi \right) \sin (\lambda \pi
-\omega (\pi )-\alpha )-g_{2}\left( \pi \right) \cos (\lambda \pi -\omega
(\pi )-\alpha )\right) \bigskip \\
&&+\dfrac{1}{\lambda }\left( f_{1}\left( 0\right) \sin (\lambda \pi -\omega
(\pi )+\beta )-g_{1}\left( 0\right) \sin \alpha -g_{2}\left( 0\right) \cos
\alpha \right) \bigskip \\
&&-\dfrac{m}{\lambda }\sin (\lambda \pi -\omega (\pi )+\beta -\alpha )\cos
(\beta +\alpha )\cos (\beta -\alpha )\bigskip \\
&&+\dfrac{m}{\lambda }\cos (\lambda \pi -\omega (\pi )+\beta -\alpha )\cos
(\beta +\alpha )\sin (\beta -\alpha )+o\left( \dfrac{e^{\left\vert \tau
\right\vert \pi }}{\lambda }\right)
\end{eqnarray*}%
for sufficiently large $\left\vert \lambda \right\vert .$ Let $\left\{
\lambda _{n}:n=0,\pm 1,\pm 2,...\right\} _{n\geq 0}$ be the set of
eigenvalues. Since $\Delta (\lambda )=\sin (\lambda \pi -\omega (\pi )+\beta
-\alpha )+O(\dfrac{e^{\left\vert \tau \right\vert x}}{\lambda })$, the
numbers $\lambda _{n}$ are real for $\left\vert n\right\vert \rightarrow
\infty $ and\ satisfy the following asimptotic formula%
\begin{equation*}
\lambda _{n}=n+\omega (\pi )-\beta +\alpha +O(\dfrac{1}{n}),\text{ \ \ }%
\left\vert n\right\vert \rightarrow \infty
\end{equation*}%
Moreover, it is clear that we can write the following equation. 
\begin{eqnarray*}
&&\bigskip \\
\left( 1+O\left( \frac{1}{\lambda _{n}}\right) \right) \tan (\lambda _{n}\pi
-\omega (\pi )+\beta -\alpha ) &=&\dfrac{1}{\lambda _{n}}\left[ \dfrac{%
m^{2}\pi }{2}-F_{0}-G_{\pi }+m\cos (\beta +\alpha )\sin (\alpha -\beta )%
\right] \\
&&+\dfrac{F_{\pi }+G_{0}}{\lambda _{n}\cos (\lambda \pi -\omega (\pi )+\beta
-\alpha )}+o\left( \dfrac{1}{\lambda _{n}}\right) .
\end{eqnarray*}%
\newline
Since $\left( 1+O(\frac{1}{\lambda _{n}})\right) ^{-1}=1+O(\frac{1}{\lambda
_{n}}),$\ \ this implies that%
\begin{equation}
\tan (\lambda _{n}\pi -\omega (\pi )+\beta -\alpha )=\dfrac{A_{1}}{\lambda
_{n}}+\frac{A_{2}}{\lambda _{n}\cos (\lambda \pi -\omega (\pi )+\beta
-\alpha )}+o\left( \dfrac{1}{\lambda _{n}}\right) \newline
\label{9}
\end{equation}%
\bigskip for sufficiently large $n$ where%
\begin{equation*}
A_{1}=\dfrac{m^{2}\pi }{2}+m\cos (\beta +\alpha )\sin (\alpha -\beta
)-\left( F_{0}+G_{\pi }\right) ,
\end{equation*}%
\begin{equation*}
A_{2}=F_{\pi }+G_{0}.
\end{equation*}%
From (\ref{9}), we can see that 
\begin{equation*}
\lambda _{n}\pi -\omega (\pi )+\beta -\alpha =n\pi +O(\frac{1}{n}),\text{\ \ 
}\left\vert n\right\vert \rightarrow \infty .
\end{equation*}%
Hence 
\begin{eqnarray}
\frac{1}{\cos (\lambda _{n}\pi -\omega (\pi )+\beta -\alpha )} &=&\frac{1}{%
\cos (n\pi +O(\frac{1}{n}))}\bigskip  \notag \\
&=&\left( -1\right) ^{n}(1+o(\frac{1}{n})),\text{\ \ }\left\vert
n\right\vert \rightarrow \infty .\bigskip  \label{10}
\end{eqnarray}%
Using (\ref{9}) and (\ref{10}) together, we obtain the following lemma.

\begin{lemma}
The following asymptotic relation is valid for $\left\vert n\right\vert
\rightarrow \infty .$%
\begin{equation}
\lambda _{n}=n+\dfrac{1}{\pi }\int\limits_{0}^{\pi }V(t)dt+\dfrac{\alpha
-\beta }{\pi }+\dfrac{A_{1}}{n\pi }+\frac{\left( -1\right) ^{n}A_{2}}{n\pi }%
+o\left( \dfrac{1}{n}\right) .  \label{11}
\end{equation}
\end{lemma}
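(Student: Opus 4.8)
The plan is to invert the tangent relation (\ref{9}) so as to isolate $\lambda_n$, using (\ref{10}) to dispose of the secant factor and the coarse asymptotics established just above to replace $\lambda_n$ by $n$ wherever it appears inside a remainder. Write $\theta_n:=\lambda_n\pi-\omega(\pi)+\beta-\alpha$. The preliminary estimate $\lambda_n=n+\omega(\pi)-\beta+\alpha+O(1/n)$ yields simultaneously that $\theta_n=n\pi+O(1/n)$ and that $\frac{1}{\lambda_n}=\frac{1}{n}+O\!\left(\frac{1}{n^{2}}\right)=\frac{1}{n}+o\!\left(\frac{1}{n}\right)$. These two facts are the only inputs I need beyond (\ref{9}) and (\ref{10}).

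Next I would simplify the right-hand side of (\ref{9}). Each term there carries a factor $1/\lambda_n=O(1/n)$, so replacing $1/\lambda_n$ by $1/n$ alters it by only $o(1/n)$; and by (\ref{10}) the secant factor satisfies $\frac{1}{\cos(\lambda_n\pi-\omega(\pi)+\beta-\alpha)}=(-1)^{n}(1+o(1/n))$. Multiplying these estimates, the second summand becomes $\frac{(-1)^{n}A_{2}}{n}+o(1/n)$ and the first becomes $\frac{A_{1}}{n}+o(1/n)$, so the whole right-hand side of (\ref{9}) collapses to $\frac{A_{1}}{n}+\frac{(-1)^{n}A_{2}}{n}+o(1/n)$.

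For the left-hand side I would exploit the $\pi$-periodicity of the tangent: $\tan\theta_n=\tan(\theta_n-n\pi)$, and since $\theta_n-n\pi=O(1/n)$ is small, the expansion $\tan u=u+O(u^{3})$ gives $\tan\theta_n=(\theta_n-n\pi)+O(1/n^{3})=(\theta_n-n\pi)+o(1/n)$. Equating the two sides then reads $\lambda_n\pi-\omega(\pi)+\beta-\alpha-n\pi=\frac{A_{1}}{n}+\frac{(-1)^{n}A_{2}}{n}+o(1/n)$; dividing through by $\pi$, solving for $\lambda_n$, and recalling $\omega(\pi)=\int_{0}^{\pi}V(t)\,dt$ produces exactly (\ref{11}).

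The argument is otherwise routine; the one point demanding care is the bookkeeping of remainders. Specifically, I would verify that each of the three substitutions—replacing $1/\lambda_n$ by $1/n$, inserting the secant estimate (\ref{10}), and linearizing the tangent—contributes an error no coarser than $o(1/n)$, so that the remainder stated in (\ref{11}) is genuinely $o(1/n)$ rather than merely $O(1/n)$. The period-$\pi$ property of $\tan$, which lets me subtract $n\pi$ and work with a small argument, is what makes the tangent linearization legitimate and is the structural heart of the computation.
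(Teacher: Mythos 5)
Your proposal is correct and takes essentially the same route as the paper, which leaves the proof implicit by stating only that (\ref{9}) and (\ref{10}) together give (\ref{11}); your steps—replacing $1/\lambda_n$ by $1/n+O(1/n^2)$, inserting the secant estimate (\ref{10}), and linearizing $\tan$ about $n\pi$ using its $\pi$-periodicity before solving for $\lambda_n$—are exactly the bookkeeping the paper omits. Nothing further is needed.
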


\section{\textbf{Nodal points}}

Let $\varphi (x,\lambda _{n})=\left( 
\begin{array}{c}
\varphi _{1}(x,\lambda _{n}) \\ 
\varphi _{2}(x,\lambda _{n})%
\end{array}%
\right) $ be the eigenfunction of (\ref{1})-(\ref{3}) corresponding to the
eigenvalue $\lambda _{n}.$ It is clear that%
\begin{equation}
\varphi _{i}(x,\lambda _{n})=U(S(x,\lambda _{n}))C_{i}(x,\lambda
_{n})-U(C(x,\lambda _{n}))S_{i}(x,\lambda _{n}),\text{ }i=1,2.  \label{a}
\end{equation}

From (\ref{a}) and Lemma 1, we can give the following asymptotic formula for
sufficiently large $\left\vert n\right\vert $%
\begin{eqnarray*}
\varphi _{1}(x,\lambda _{n}) &=&\left. -\cos (\lambda _{n}x-\omega
(x)-\alpha )-\dfrac{m^{2}x}{2\lambda _{n}}\sin (\lambda _{n}x-\omega
(x)-\alpha )\right. \bigskip \\
&&\left. -\frac{m\sin 2\alpha }{2\lambda _{n}}\sin (\lambda _{n}x-\omega
(x)-\alpha )-\frac{m\sin ^{2}\alpha }{\lambda _{n}}\cos (\lambda
_{n}x-\omega (x)-\alpha )\right. \bigskip \\
&&\left. +\frac{\left( -1\right) ^{n}f_{1}\left( \pi \right) }{\lambda _{n}}%
(\cos \beta \cos (\lambda _{n}x-\omega (x)-\alpha )-\sin \beta \sin (\lambda
_{n}x-\omega (x)-\alpha ))\right. \bigskip \\
&&\left. -\frac{\left( -1\right) ^{n}f_{2}\left( \pi \right) }{\lambda _{n}}%
(\cos \beta \sin (\lambda _{n}x-\omega (x)-\alpha )+\sin \beta \cos (\lambda
_{n}x-\omega (x)-\alpha ))\right. \bigskip \\
&&\left. +\frac{f_{1}\left( 0\right) }{\lambda _{n}}(\sin \alpha \sin
(\lambda _{n}x-\omega (x)-\alpha )-\cos \alpha \cos (\lambda _{n}x-\omega
(x)-\alpha ))\right. \bigskip \\
&&\left. +\frac{f_{2}\left( 0\right) }{\lambda _{n}}(\cos \alpha \sin
(\lambda _{n}x-\omega (x)-\alpha )+\sin \alpha \cos (\lambda _{n}x-\omega
(x)-\alpha ))\right. \bigskip \\
&&\left. +o\left( \dfrac{e^{\left\vert \tau \right\vert x}}{\lambda }\right)
.\right.
\end{eqnarray*}

\begin{lemma}
For sufficiently large positive $n$, $\varphi _{1}(x,\lambda _{n})$ has
exactly $n$ zeros, namely nodal points, $\left\{ x_{n}^{j}:j=\overline{0,n-1}%
\right\} $ in the interval $\left( 0,\pi \right) $:\newline
The numbers $\left\{ x_{n}^{j}\right\} $ satisfy the following asymptotic
formula:%
\begin{equation}
\begin{array}{l}
x_{n}^{j}=\dfrac{\left( j+1/2\right) \pi }{n}+\dfrac{\omega
(x_{n}^{j})+\alpha }{n}-\dfrac{\left( j+1/2\right) }{n}\left( \dfrac{\omega
(\pi )+\alpha -\beta }{n}\right) \bigskip \\ 
\text{ \ \ }-\dfrac{\omega (\pi )+\alpha -\beta }{n^{2}\pi }\left( \omega
(x_{n}^{j})+\alpha \right) +\dfrac{1}{n^{2}}\left( \frac{m^{2}x_{n}^{j}}{2}+%
\frac{m\sin 2\alpha }{2}+(-1)^{n}F_{\pi }-F_{0}\right) \bigskip \\ 
\text{ \ \ }-\dfrac{\left( j+1/2\right) }{n}\left( \dfrac{A_{1}+\left(
-1\right) ^{n}A_{2}}{n^{2}}\right) +o\left( \frac{1}{n^{2}}\right) .\bigskip%
\end{array}
\label{13}
\end{equation}
\end{lemma}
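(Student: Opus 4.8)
The plan is to find the zeros of $\varphi _{1}(x,\lambda _{n})$ by treating the expansion stated just before the lemma as a perturbation of its leading term $-\cos (\lambda _{n}x-\omega (x)-\alpha )$. First I would note that, for large $n$, the phase $\psi (x):=\lambda _{n}x-\omega (x)-\alpha $ is strictly increasing (its derivative $\lambda _{n}-V(x)$ is positive) and, by Lemma 1, runs from $\psi (0)=-\alpha $ to $\psi (\pi )=\lambda _{n}\pi -\omega (\pi )-\alpha =n\pi -\beta +O(1/n)$. Hence the unperturbed equation $\cos \psi (x)=0$ has exactly $n$ simple roots in $(0,\pi )$, located where $\psi (x)=(j+1/2)\pi $, $j=\overline{0,n-1}$. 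Since every remaining term in the expansion of $\varphi _{1}$ is $O(1/\lambda _{n})$ uniformly on $[0,\pi ]$, a Rouch\'{e}-type (or monotonicity plus intermediate value) argument shows that $\varphi _{1}(x,\lambda _{n})$ likewise has exactly $n$ zeros $x_{n}^{j}$, each $O(1/\lambda _{n})$-close to the corresponding unperturbed root.

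Next I would set up the scalar equation for $x_{n}^{j}$. Writing $\psi (x_{n}^{j})=(j+1/2)\pi +\delta _{n}^{j}$ with $\delta _{n}^{j}=O(1/\lambda _{n})$ and substituting into $\varphi _{1}(x_{n}^{j},\lambda _{n})=0$, I use that $-\cos \psi (x_{n}^{j})=(-1)^{j}\sin \delta _{n}^{j}=(-1)^{j}\delta _{n}^{j}+o(1/\lambda _{n})$, while in each $O(1/\lambda _{n})$ correction term I may replace $\cos \psi (x_{n}^{j})$ by $0$ and $\sin \psi (x_{n}^{j})$ by $(-1)^{j}$ at the cost of an $o(1/\lambda _{n})$ error. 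The equation then reduces to a single relation for $\delta _{n}^{j}$; collecting the bracketed coefficients and recognising $F_{0}=f_{1}(0)\sin \alpha +f_{2}(0)\cos \alpha $ and $F_{\pi }=f_{1}(\pi )\sin \beta +f_{2}(\pi )\cos \beta $ yields
\[
\delta _{n}^{j}=\dfrac{1}{\lambda _{n}}\Big( \dfrac{m^{2}x_{n}^{j}}{2}+\dfrac{m\sin 2\alpha }{2}+(-1)^{n}F_{\pi }-F_{0}\Big) +o\Big( \dfrac{1}{\lambda _{n}}\Big) .
\]

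Solving $\lambda _{n}x_{n}^{j}=(j+1/2)\pi +\omega (x_{n}^{j})+\alpha +\delta _{n}^{j}$ for $x_{n}^{j}$ gives
\[
x_{n}^{j}=\dfrac{(j+1/2)\pi +\omega (x_{n}^{j})+\alpha }{\lambda _{n}}+\dfrac{1}{\lambda _{n}^{2}}\Big( \dfrac{m^{2}x_{n}^{j}}{2}+\dfrac{m\sin 2\alpha }{2}+(-1)^{n}F_{\pi }-F_{0}\Big) +o\Big( \dfrac{1}{n^{2}}\Big) ,
\]
where the $o(1/n^{2})$ arises from dividing the $o(1/\lambda _{n})$ remainder of $\delta _{n}^{j}$ by $\lambda _{n}\sim n$. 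The final step is to insert the eigenvalue asymptotics of Lemma 1 and expand $1/\lambda _{n}$ in powers of $1/n$: its first two coefficients are $1$ and $-\dfrac{\omega (\pi )+\alpha -\beta }{\pi }$, the coefficient of $n^{-3}$ contains $-\dfrac{A_{1}+(-1)^{n}A_{2}}{\pi }$, and $1/\lambda _{n}^{2}=\dfrac{1}{n^{2}}+O(1/n^{3})$. Multiplying the $O(n)$ factor $(j+1/2)\pi $ by these coefficients produces $\dfrac{(j+1/2)\pi }{n}$, $-\dfrac{(j+1/2)(\omega (\pi )+\alpha -\beta )}{n^{2}}$ and $-\dfrac{(j+1/2)(A_{1}+(-1)^{n}A_{2})}{n^{3}}$, whereas the bounded factors $\omega (x_{n}^{j})+\alpha $ and the $\lambda _{n}^{-2}$-bracket generate the remaining terms; regrouping yields (\ref{13}).

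The point demanding the most care is the bookkeeping in this last step. Since $j$ runs up to $n-1$, the factor $(j+1/2)$ is of size $O(n)$, so a contribution of nominal order $1/n^{3}$ coming from the third-order part of $1/\lambda _{n}$ is promoted to order $1/n^{2}$ and must be kept, while the bounded contributions of that order genuinely fall into the remainder. Consequently the expansion of $1/\lambda _{n}$ must be carried to third order, and the $o(1/n)$ precision of Lemma 1 is exactly what guarantees that the surviving error stays $o(1/n^{2})$ uniformly in $j$. Verifying this uniform matching of powers of $n$, together with the zero-counting of the first paragraph that legitimises the index range $j=\overline{0,n-1}$, is the main obstacle.
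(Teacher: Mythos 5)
Your proposal is correct and follows essentially the same route as the paper's proof: the paper likewise substitutes $\kappa_{n,j}=\lambda_{n}x_{n}^{j}-\omega (x_{n}^{j})-\alpha$ into $\varphi_{1}(x_{n}^{j},\lambda_{n})=0$, reduces to $\tan \left( \kappa_{n,j}-\pi /2\right) =\frac{1}{\lambda _{n}}\left( \frac{m^{2}x_{n}^{j}}{2}+\frac{m\sin 2\alpha }{2}+(-1)^{n}F_{\pi }-F_{0}\right) +o\left( \frac{1}{\lambda _{n}}\right)$ (your $\delta _{n}^{j}$-linearisation is the same step), solves for $x_{n}^{j}$, and inserts the third-order expansion of $\lambda _{n}^{-1}$ exactly as you describe. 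If anything, you go slightly beyond the paper, which asserts the count of $n$ zeros without the Rouch\'{e}/monotonicity argument you sketch, and which leaves implicit the uniform-in-$j$ bookkeeping you flag as the main obstacle.
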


\begin{proof}
Let $\kappa _{n,j}:=\lambda _{n}x_{n}^{j}-\omega (x_{n}^{j})-\alpha $. From $%
\varphi _{1}(x_{n}^{j},\lambda _{n})=0,$ we can write for $n\rightarrow
\infty $%
\begin{eqnarray*}
0 &=&-\cos \kappa _{n,j}-\dfrac{m\sin 2\alpha }{2\lambda _{n}}\sin \kappa
_{n,j}-\dfrac{m\sin ^{2}\alpha }{\lambda _{n}}\cos \kappa _{n,j}-\dfrac{%
m^{2}x_{n}^{j}}{2\lambda _{n}}\sin \kappa _{n,j}+\bigskip \\
&&+\frac{\left( -1\right) ^{n}f_{1}\left( \pi \right) }{\lambda _{n}}\left(
\cos \kappa _{n,j}\cos \beta -\sin \kappa _{n,j}\sin \beta \right) +\bigskip
\\
&&-\frac{\left( -1\right) ^{n}f_{2}\left( \pi \right) }{\lambda _{n}}\left(
\sin \kappa _{n,j}\cos \beta +\cos \kappa _{n,j}\sin \beta \right) +\bigskip
\\
&&+\frac{f_{1}\left( 0\right) }{\lambda _{n}}\left( \sin \kappa _{n,j}\sin
\alpha -\cos \kappa _{n,j}\cos \alpha \right) +\bigskip \\
&&+\bigskip \frac{f_{2}\left( 0\right) }{\lambda _{n}}\left( \sin \kappa
_{n,j}\cos \alpha +\cos \kappa _{n,j}\sin \alpha \right) +o\left( \dfrac{%
e^{\left\vert \tau \right\vert \pi }}{\lambda _{n}}\right) .
\end{eqnarray*}%
$\bigskip $\newline

It can be obtained that%
\begin{eqnarray*}
\tan \left( \kappa _{n,j}-\dfrac{\pi }{2}\right) &=&\frac{\dfrac{%
m^{2}x_{n}^{j}}{2\lambda _{n}}+\dfrac{m\sin 2\alpha }{2\lambda _{n}}+\frac{%
\left( -1\right) ^{n}F_{\pi }-F_{0}}{\lambda _{n}}+o\left( \dfrac{1}{\lambda
_{n}}\right) }{1+O(\dfrac{1}{\lambda _{n}})} \\
&=&\dfrac{m^{2}x_{n}^{j}}{2\lambda _{n}}+\dfrac{m\sin 2\alpha }{2\lambda _{n}%
}+\frac{\left( -1\right) ^{n}F_{\pi }-F_{0}}{\lambda _{n}}+o\left( \dfrac{1}{%
\lambda _{n}}\right)
\end{eqnarray*}

Taking into account Taylor's expansion formulas of the arctangent we get$%
\medskip $ 
\begin{equation*}
\kappa _{n,j}=\left( j+\frac{1}{2}\right) \pi +\dfrac{1}{\lambda _{n}}\left( 
\frac{m^{2}x_{n}^{j}}{2}+\frac{m\sin 2\alpha }{2}+(-1)^{n}F_{\pi
}-F_{0}\right) +o\left( \dfrac{1}{\lambda _{n}}\right) .\medskip
\end{equation*}

It follows from the last equality

\begin{equation*}
x_{n}^{j}=\dfrac{\left( j+\frac{1}{2}\right) \pi +\omega (x_{n}^{j})+\alpha 
}{\lambda _{n}}+\dfrac{1}{\lambda _{n}^{2}}\left( \frac{m^{2}x_{n}^{j}}{2}+%
\frac{m\sin 2\alpha }{2}+(-1)^{n}F_{\pi }-F_{0}\right) +o\left( \dfrac{1}{%
\lambda _{n}^{2}}\right) .
\end{equation*}%
Finally, using the asymptotic formula%
\begin{equation*}
\lambda _{n}^{-1}=\dfrac{1}{n}\left\{ 1-\dfrac{\omega (\pi )+\alpha -\beta }{%
n\pi }-\dfrac{A_{1}}{n^{2}\pi }-\frac{\left( -1\right) ^{n}A_{2}}{n^{2}\pi }%
+o\left( \dfrac{1}{n^{3}}\right) \right\}
\end{equation*}%
we can obtain our desired formula: (\ref{13}).
\end{proof}

\section{\textbf{Inverse nodal problem}}

Let $X$ be the set of nodal points and $\omega (\pi )=0.$ For each fixed $%
x\in \left( 0,\pi \right) $ $\ $we can choose a sequence $\left(
x_{n}^{j(n)}\right) \subset X$ so that $x_{n}^{j(n)}$ converges to $x.$ Then
the following limits are exist and finite:%
\begin{equation}
\underset{\left\vert n\right\vert \rightarrow \infty }{\lim }n\left(
x_{n}^{j(n)}-\frac{\left( j(n)+\frac{1}{2}\right) \pi }{n}\right) =\psi
_{1}(x),  \label{14}
\end{equation}%
where%
\begin{equation*}
\psi _{1}(x)=\omega (x)+\frac{\left( \beta -\alpha \right) x}{\pi }+\alpha
\end{equation*}%
and%
\begin{equation}
\underset{\left\vert n\right\vert \rightarrow \infty }{\lim }n^{2}\pi \left(
x_{n}^{j(n)}-\frac{\left( j(n)+\frac{1}{2}\right) \pi +\left( \omega
(x_{n}^{j(n)})+\alpha \right) }{n}+\dfrac{\left( j(n)+1/2\right) }{n}\left( 
\dfrac{\alpha -\beta }{n}\right) \right) =\psi _{2}(x),  \label{15}
\end{equation}%
where%
\begin{equation*}
\psi _{2}(x)=\left\{ 
\begin{array}{cc}
\psi _{2}^{+}(x), & n\text{ is even}\medskip \\ 
\psi _{2}^{-}(x), & n\text{ is odd}%
\end{array}%
\right.
\end{equation*}%
and%
\begin{eqnarray*}
\psi _{2}^{+}(x) &=&\left( \beta -\alpha \right) \left( \omega (x)+\alpha
\right) -x\left( A_{1}+A_{2}\right) +\pi \left( \frac{m^{2}x}{2}+\frac{m\sin
2\alpha }{2}+F_{\pi }-F_{0}\right) ,\medskip \\
\psi _{2}^{-}(x) &=&\left( \beta -\alpha \right) \left( \omega (x)+\alpha
\right) -x\left( A_{1}-A_{2}\right) +\pi \left( \frac{m^{2}x}{2}+\frac{m\sin
2\alpha }{2}-F_{\pi }-F_{0}\right) .
\end{eqnarray*}%
Therefore, proof of the following theorem is clear.

\begin{theorem}
The given dense subset of nodal points $X$ uniquely determines the potential 
$V(x)$ a.e. on $\left( 0,\pi \right) $ and the coefficients $\alpha $ and $%
\beta $ in the boundary conditions; Moreover, $V(x),$ $\alpha $ and $\beta $
can be reconstructed by the following formulae:

\textbf{Step-1:} For each fixed $x\in (0,\pi ),$ choose a sequence $\left(
x_{n}^{j(n)}\right) \subset X$ such that $\underset{\left\vert n\right\vert
\rightarrow \infty }{\lim }x_{n}^{j(n)}=x;$

\textbf{Step-2: }Find the function $\psi _{1}(x)$ and $\psi _{2}(x)$ from (%
\ref{14})\ and (\ref{15}), and calculate 
\begin{eqnarray*}
\alpha &=&\psi _{1}(0)\medskip \\
\beta &=&\psi _{1}(\pi )\medskip \\
V(x) &=&\psi _{1}^{\prime }(x)+\frac{\alpha -\beta }{\pi }\medskip \\
F_{\pi } &=&\frac{\psi _{2}^{+}(0)-\psi _{2}^{-}(0)}{2\pi }\medskip \\
G_{0} &=&\frac{\psi _{2}^{-}(\pi )-\psi _{2}^{+}(\pi )}{2\pi }\medskip .
\end{eqnarray*}

Additionally, if $m$ is known, then $F_{0}$ and $G_{\pi }$ can be found by
the following formulas%
\begin{eqnarray*}
F_{0} &=&-\frac{\psi _{2}^{+}(0)+\psi _{2}^{-}(0)}{2}+\alpha \left( \beta
-\alpha \right) +m\pi \frac{\sin 2\alpha }{2},\bigskip \\
G_{\pi } &=&\frac{\psi _{2}^{-}(\pi )+\psi _{2}^{+}(\pi )}{2\pi }-\frac{%
\alpha \left( \beta -\alpha \right) }{\pi }-m\frac{\sin 2\beta }{2}.\bigskip
\end{eqnarray*}
\end{theorem}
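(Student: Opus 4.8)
The plan is to exploit the fact that the two limit functions $\psi_1$ and $\psi_2$, defined by \eqref{14} and \eqref{15}, are manufactured entirely out of the nodal set $X$: for each fixed $x$ they arise as limits of scaled combinations of the nodal abscissae $x_n^{j(n)}$ and the indices $j(n),n$, with no reference to the unknown coefficients. Consequently, once these limits are known to exist and be finite, the remaining task is purely algebraic — invert the explicit expressions that give $\psi_1$ and $\psi_2$ in terms of $V$, $\alpha$, $\beta$ and the boundary functionals. Since every reconstruction formula will turn out to express an unknown solely through $\psi_1$ and $\psi_2$, uniqueness is automatic: two problems sharing the same $X$ produce the same $\psi_1,\psi_2$ and hence the same $V$, $\alpha$, $\beta$ (and the same boundary data).

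First I would recover $\alpha$, $\beta$ and $V$ from $\psi_1$. Since $\psi_1(x)=\omega(x)+(\beta-\alpha)x/\pi+\alpha$ with $\omega(x)=\int_0^x V$, I evaluate at the endpoints using $\omega(0)=0$ and the normalization $\omega(\pi)=0$: this gives $\psi_1(0)=\alpha$ and $\psi_1(\pi)=(\beta-\alpha)+\alpha=\beta$. Differentiating, $\psi_1'(x)=V(x)+(\beta-\alpha)/\pi$ for almost every $x$ (legitimate since $\omega$ is absolutely continuous), whence $V(x)=\psi_1'(x)+(\alpha-\beta)/\pi$ a.e. This already settles the principal assertion of the theorem.

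Next I would extract the boundary data from $\psi_2$, using the even/odd splitting $\psi_2^{\pm}$ forced by the $(-1)^n$ terms. Subtracting the two branches annihilates the $A_1$- and $F_0$-contributions and retains only the sign-sensitive pieces: at $x=0$ this yields $\psi_2^+(0)-\psi_2^-(0)=2\pi F_\pi$, and at $x=\pi$, after using $A_2=F_\pi+G_0$, it yields $\psi_2^-(\pi)-\psi_2^+(\pi)=2\pi G_0$, giving the stated formulas for $F_\pi$ and $G_0$. Adding the two branches instead isolates the $A_1$- and $F_0$-dependent parts; evaluating the sum at $x=0$ and at $x=\pi$ and substituting the already-known $\alpha,\beta$ (and the known $m$) produces $F_0$ and $G_\pi$. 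The only nonroutine manipulation is converting the $A_1$ expression into the $\sin 2\beta$ form of the theorem, which rests on the identity $\cos(\alpha+\beta)\sin(\alpha-\beta)=\tfrac12(\sin 2\alpha-\sin 2\beta)$.

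Conditional on the limit relations \eqref{14} and \eqref{15}, the argument is pure algebra and presents no real obstacle; the only delicate point is the order of operations — $\alpha$ and $\beta$ must be read off from $\psi_1$ first, because the formulas for $F_0$ and $G_\pi$ feed those values back in — together with the trigonometric identity above. Worth emphasizing, though, is that the genuine analytic work has already been front-loaded into deriving \eqref{14}--\eqref{15}: there one must invoke the nodal asymptotics \eqref{13}, track the $(-1)^n$ terms (which is precisely why $\psi_2$ must split into branches $\psi_2^{\pm}$ along even and odd indices), verify that the $o(1/n^2)$ remainder survives multiplication by $n^2\pi$ only as an $o(1)$ term, and use the density of $X$ (the nodal mesh tends to $0$) to guarantee that an approximating sequence $x_n^{j(n)}\to x$ exists for every $x\in(0,\pi)$. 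That verification is the step I would regard as carrying the weight of the argument.
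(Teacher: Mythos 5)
Your proposal is correct and is essentially the paper's own (largely implicit) proof: the paper likewise treats the theorem as a direct algebraic inversion of the limit functions in (\ref{14}) and (\ref{15}) --- endpoint evaluation and a.e.\ differentiation of $\psi _{1}$ under the normalization $\omega (\pi )=0$, subtraction and addition of the branches $\psi _{2}^{\pm }$ (using $A_{2}=F_{\pi }+G_{0}$), and the identity $2\cos (\alpha +\beta )\sin (\alpha -\beta )=\sin 2\alpha -\sin 2\beta $ to produce the $\sin 2\beta $ term in $G_{\pi }$, with the analytic burden carried by the earlier derivation of the nodal asymptotics (\ref{13}). One substantive remark: carrying out your sum-at-zero computation exactly gives $\pi F_{0}=-\frac{\psi _{2}^{+}(0)+\psi _{2}^{-}(0)}{2}+\alpha \left( \beta -\alpha \right) +\frac{m\pi \sin 2\alpha }{2}$, so the paper's printed formula for $F_{0}$ is missing an overall factor $1/\pi $ (undetected by the paper's example, where $F_{0}=0$), and your derivation would yield the corrected version.
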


\begin{example}
Let $\left\{ x_{n}^{j}\right\} \subset X$ be the dense subset of nodal
points in $(0,\pi )$ given by the following asimptotics:\newline
$x_{n}^{j}=\dfrac{\left( j+1/2\right) \pi }{n}+\dfrac{\sin \dfrac{\left(
j+1/2\right) \pi }{n}+\frac{\pi }{6}}{n}+\dfrac{\left( j+1/2\right) \pi }{%
6n^{2}}$

$+\dfrac{1}{6n^{2}}\left( \sin \dfrac{\left( j+1/2\right) \pi }{n}+\frac{\pi 
}{6}\right) +\dfrac{1}{n^{2}}\left( \frac{3}{2}\left( \dfrac{\left(
j+1/2\right) \pi }{n}+\frac{1}{2}\right) +(-1)^{n}2\pi \right) $

$-\dfrac{\left( j+1/2\right) }{n}\left( \dfrac{\left( 2-\sqrt{3}\right) \pi
+\left( -1\right) ^{n}4\pi }{2n^{2}}\right) +o\left( \frac{1}{n^{2}}\right) $%
. $\bigskip $\newline
It can \ be calculated from (\ref{14})\ and (\ref{15}) that,%
\begin{eqnarray*}
\psi _{1}(x) &=&\sin x+\frac{1}{6}\left( x+\pi \right) \\
\psi _{2}(x) &=&\left\{ 
\begin{array}{cc}
\frac{\pi }{6}\left( \sin x+\frac{\pi }{6}\right) -\frac{\pi }{2}\left( 6-%
\sqrt{3}\right) x+\pi \left( \frac{3}{2}\left( x+\frac{1}{2}\right) +2\pi
\right) , & n\text{ is even}\medskip \\ 
\frac{\pi }{6}\left( \sin x+\frac{\pi }{6}\right) +\frac{\pi }{2}\left( 2+%
\sqrt{3}\right) x+\pi \left( \frac{3}{2}\left( x+\frac{1}{2}\right) -2\pi
\right) , & n\text{ is odd}%
\end{array}%
\right.
\end{eqnarray*}%
Therefore, it is obtained by using the algorithm in Theorem 1,%
\begin{eqnarray*}
&&\left. \alpha =\frac{\pi }{6},\medskip \right. \\
&&\left. \beta =\frac{\pi }{3},\medskip \right. \\
&&\medskip \left. V(x)=\cos x,\right. \\
&&\left. F_{\pi }=2\pi ,\right. \\
&&\left. G_{0}=0.\right.
\end{eqnarray*}%
If $m=\sqrt{3}$, then%
\begin{eqnarray*}
&&\left. F_{0}=0,\bigskip \right. \\
&&\left. G_{\pi }=\frac{\left( \sqrt{3}+1\right) \pi }{2}.\bigskip \bigskip
\right. \bigskip
\end{eqnarray*}
\end{example}

\begin{acknowledgement}
The authors would like to thank the referees for their valuable comments
which helped to improve the manuscript.
\end{acknowledgement}

\end{document}